\theoremstyle{plain}
\newtheorem{thm}{Theorem}[section]
\newtheorem{lem}[thm]{Lemma}
\newtheorem{prop}[thm]{Proposition}
\theoremstyle{definition}
\newtheorem{nota}[thm]{Notation}
\begin{document}

\setlength{\baselineskip}{16pt}
 
\title[Steenrod algebra on the modular invariants]{On the action of the Steenrod algebra on the modular invariants of special linear group}

 \author{Nguy\~\ecircumflex n Sum}

\address{Department of Mathematics, Quy Nh\ohorn n University, 
170 An D\uhorn \ohorn ng V\uhorn \ohorn ng, Quy Nh\ohorn n, B\`inh \DJ\d inh, Viet Nam}
\email{nguyensum@qnu.edu.vn}
\subjclass[2010]{Primary 55S10; Secondary 55S05}

\begin{abstract} 
We  compute the action of the Steenrod algebra on generators of algebras of  invariants of special linear group ${SL_n=SL(n, \mathbb Z/p)}$ in the polynomial algebra with $ p$ an odd prime number. 
\end{abstract}
\keywords{Invariant theory, Dickson-M\`ui invariants, Steenrod-Milnor operations}

\maketitle

\section{Introduction}

For an an odd prime $p$, let $SL_n$ denote the special linear subgroup of $GL(n,\mathbb Z/p)$, which acts naturally on the cohomology algebra $H^*(B(\mathbb Z/p)^n)$. Here and in what follows, the cohomology is always taken with coefficients in the prime field $\mathbb Z/p$. 

According to \cite{mui2}, $H^*(B(\mathbb Z/p)^n) = E(x_1,\ldots , x_n)\otimes P(y_1,\ldots, y_n)$ with $\dim x_i = 1$, $y_i = \beta x_i$, where $\beta$ is the Bockstein homomorphism, $E(.,\ldots,.)$ and $P(.,\ldots,.)$ are the exterior and polynomial algebras over $\mathbb Z/p$ generated by the variables indicated. Let $(e_{k+1},\ldots,e_n), k \geqslant 0,$be a sequence of non-negative integers. Following M\`ui \cite{mui1}, we define
$$ [k;e_{k+1},\ldots,e_n] = [k;e_{k+1},\ldots,e_n](x_1,\ldots , x_n,y_1,\ldots, y_n)$$
by 
$$ [k;e_{k+1},  \ldots,  e_n]  =  \frac 1{k!}
\begin{vmatrix} x_1&\cdots &x_n\\
  \vdots&\cdots &\vdots\\
  x_1&\cdots &x_n\\
  y_1^{p^{e_{k+1}}}&\cdots &y_n^{p^{e_{k+1}}}\\
  \vdots&\cdots  &\vdots\\
   y_1^{p^{e_n}} & \cdots & y_n^{p^{e_n}}
   \end{vmatrix}. $$
The precise meaning of the right hand side is given in \cite{mui1}. For $k= 0$, we write
$$[0;e_{1},\ldots,e_n] = [e_{1},\ldots,e_n] = \det \left(y_i^{p^{e_j}}\right).$$
We set
\begin{align*} L_{n,s} &= [0,\ldots, \hat s,\ldots,n], \ 0 \leqslant s \leqslant n,\\ L_n &= L_{n,n} = [0,\ldots,n-1].
\end{align*}
Each $[k;e_{k+1},  \ldots,  e_n]$ is an invariant of $SL_n$ and $[e_{1},\ldots,e_n]$ is divisible by $L_n$. Then Dickson invariants $Q_{n,s}, \ 0 \leqslant s \leqslant n,$ and M\`ui invariants $ M_{n,s_1,\ldots,s_k}$, $0 \leq \ s_1 < \ldots < s_k \le n$ are defined by
\begin{align*} Q_{n,s}&=L_{n,s}/L_n,\\ M_{n,s_1,\ldots,s_k} &= [k;0,\ldots,\hat s_1,\ldots,\hat s_k,\ldots,n-1].\end{align*}
Note that $Q_{n,n} =1$, $Q_{n,0} = L_n^{p-1}$, $M_{n,0,\ldots,n-1} = [n;\emptyset] = x_1\ldots x_n$. 

M\`ui proved in \cite{mui1} that $H^*(B(\mathbb Z/p)^n)^{SL_n}$ is the free module over the Dickson algebra $P(L_n,Q_{n,1},\ldots,Q_{n,n-1})$ generated by 1 and $ M_{n,s_1,\ldots,s_k}$ with $0 \leq \ s_1 < \ldots < s_k \le n$.

\medskip
The Steenrod algebra $\mathcal A(p)$ acts on $H^*(B(\mathbb Z/p)^n)$ by well-known rules. Since this action commutes with the action of $SL_n$, it induces an action of $\mathcal A(p)$ on $H^*(B(\mathbb Z/p)^n)^{SL_n}$.

Let $\tau_s$ and $\xi_i$ be the Milnor elements of dimensions $2p^s-1$ and $2p^i-2$, respectively, in the dual algebra $\mathcal A(p)^*$ of $\mathcal A(p)$. Milnor showed in \cite{mil} that
$$\mathcal A(p)^* = E(\tau_0,\tau_1, \ldots )\otimes P(\xi_1, \xi_2,\ldots).$$ 
So, $\mathcal A(p)^*$ has a basis consisting of all monomials $\tau_S\xi^R = \tau_{s_1}\ldots \tau_{s_t}\xi_1^{r_1}\ldots \xi_m^{r_m}$ with $S = (s_1,\ldots,s_t)$, $0\leqslant s_1 < \ldots < s_t$, $R= (r_1,\ldots,r_m)$. Let $St^{S,R} \in \mathcal A(p)$ denote the dual of $\tau_S\xi^R$ with respect to this basis of $\mathcal A(p)^*$. Then $\mathcal A(p)$ has a new basis consisting of all operations $St^{S,R}$. In particular, for $S = \emptyset$, $R=(k)$, $St^{S,R}$ is nothing but the Steenrod operation $P^k$.

The action of $P^k$ on Dickson and M\`ui invariants was explicitly computed by H\uhorn ng and Minh \cite{hum}. The action of $St^{S,R}$ on the invariant $[n,\emptyset] =  x_1\ldots x_n$ was computed by M\`ui \cite{mui2}.

\medskip
In this paper, we compute the action of $St^{S,R}$ on  $[k;e_{k+1},  \ldots,  e_n]$ and prove a nice relation between the invariants $[k;e_{k+1},  \ldots,  e_n+s]$, $0\leqslant s \leqslant n$, and the Dickson invariants. Using these results, we explicitly compute the action of $P^k$ on M\`ui invariants $ M_{n,s_1,\ldots,s_k}$ which was first computed in H\uhorn ng and Minh \cite{hum} by another method.

\medskip
To state the main results, we introduce some notations. Let $J = (J_0,J_1,$ $\ldots ,J_m)$ with $J_s \subset \{{k+1},  \ldots,  n\}$, $0 \leqslant s \leqslant m$, and $\coprod_{s=0}^mJ_s = \{{k+1},  \ldots,  n\}$ (disjoint union). We define the sequence $R_J = (r_{J_1},\ldots,r_{J_m})$ and the function $\Phi_J:\{{k+1},  \ldots,  n\} \to \{{1},  \ldots,  m\}$ by setting
\begin{align*} r_{J_s} &= \sum_{j\in J_s} p^{e_j},\ 0\leqslant s \leqslant m,\\
\Phi_J(i) &= s \ \text{ if }\ i \in J_s,\ k+1 \leqslant i \leqslant n.
\end{align*}
The main result of this paper is 
\begin{thm}\label{dl1} Suppose that $e_i \ne e_j$ for $i \ne j$, $S = (s_1,\ldots,s_t)$, $s_1 < \ldots < s_t<m$. Under the above notation we have
\begin{multline*}St^{S,R}[k;e_{k+1},  \ldots,  e_n]\\ = \begin{cases} (-1)^{t(k-t)}[k-t,s_1,\ldots,s_t,e_{k+1}+\Phi_J(k+1),\ldots , e_n + \Phi_J(n)],\\ \hskip5.5cm R= R_J, \ \text{ for some } J,\\ 0, \hskip5cm \text{ otherwise. }
\end{cases}
\end{multline*}
\end{thm}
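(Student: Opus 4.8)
The plan is to work with the dual (coaction) form of the Steenrod operations. Following M\`ui, assemble the $St^{S,R}$ into the total map $\lambda(v)=\sum_{S,R}St^{S,R}(v)\otimes\tau_S\xi^R$; the Cartan formula is precisely the assertion that $\lambda$ is a ring homomorphism from $H^*(B(\mathbb Z/p)^n)$ to the (completed) tensor product $H^*(B(\mathbb Z/p)^n)\otimes\mathcal A(p)^*$, and on generators it is given by
$$\lambda(x_i)=x_i\otimes 1+\sum_{s\geqslant 0}y_i^{p^s}\otimes\tau_s,\qquad \lambda(y_i)=\sum_{j\geqslant 0}y_i^{p^j}\otimes\xi_j\quad(\xi_0=1),$$
which merely repackages M\`ui's formulas $St^{S,R}x_i=x_i,\ y_i^{p^s},$ or $0$ according as $(S,R)=(\emptyset,0)$, $(S,R)=((s),0)$, or otherwise. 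The first step is the elementary consequence, using the Frobenius in characteristic $p$, that
$$\lambda(y_i^{p^e})=\lambda(y_i)^{p^e}=\sum_{j\geqslant 0}y_i^{p^{e+j}}\otimes\xi_j^{p^e},$$
so $St^{S,R}(y_i^{p^e})=y_i^{p^{e+j}}$ when $S=\emptyset$ and $\xi^R=\xi_j^{p^e}$, and $0$ otherwise.

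Next I expand the bideterminant, in the sense of \cite{mui1}, as
$$[k;e_{k+1},\ldots,e_n]=\frac1{k!}\sum_{\sigma\in S_n}\mathrm{sgn}(\sigma)\,x_{\sigma(1)}\cdots x_{\sigma(k)}\,y_{\sigma(k+1)}^{p^{e_{k+1}}}\cdots y_{\sigma(n)}^{p^{e_n}},$$
apply the ring homomorphism $\lambda$ factor by factor, and read off the coefficient of $\tau_S\xi^R$. From the $k$ factors $\lambda(x_{\sigma(a)})$ one must select exactly $t$ of them to supply the (necessarily distinct, since $\tau$'s are exterior) generators $\tau_{s_1},\ldots,\tau_{s_t}$ --- converting those $x_{\sigma(a)}$ into $y_{\sigma(a)}^{p^{s_i}}$ --- while the remaining $k-t$ supply their $x$-part; each factor $\lambda(y_{\sigma(b)}^{p^{e_b}})$ supplies either its trivial part or one term $y_{\sigma(b)}^{p^{e_b+j_b}}\otimes\xi_{j_b}^{p^{e_b}}$. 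Hence the total $\xi$-component equals $\prod_b\xi_{j_b}^{p^{e_b}}=\prod_s\xi_s^{\,\sum_{b\,:\,j_b=s}p^{e_b}}$, which is $\xi^R$ exactly when $R=R_J$ for the partition $J$ with $J_s=\{b:j_b=s\}$, i.e. $j_b=\Phi_J(b)$; otherwise the coefficient is $0$, giving the second case of the theorem. Here the hypothesis $e_i\neq e_j$ is used: a sum of pairwise distinct powers of $p$ has all base-$p$ digits equal to $0$ or $1$, so $r_s=\sum_{j\in J_s}p^{e_j}$ recovers $\{e_j:j\in J_s\}$ and hence $J_s$; thus the $J$ with $R=R_J$, when it exists, is unique, so the first case is unambiguous.

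It remains to identify the surviving sum with $(-1)^{t(k-t)}[k-t,s_1,\ldots,s_t,e_{k+1}+\Phi_J(k+1),\ldots,e_n+\Phi_J(n)]$. The combinatorial factor is routine: the $\binom kt$ choices of which $x$-factors carry the $\tau$'s and the $t!$ matchings of them with $\tau_{s_1},\ldots,\tau_{s_t}$ contribute equal terms up to sign, and $\tfrac1{k!}\binom kt t!=\tfrac1{(k-t)!}$ is exactly the normalization of the target bideterminant, whose rows are the $k-t$ surviving rows of $x$'s, then $y_\bullet^{p^{s_1}},\ldots,y_\bullet^{p^{s_t}}$, then $y_\bullet^{p^{e_{k+1}+\Phi_J(k+1)}},\ldots,y_\bullet^{p^{e_n+\Phi_J(n)}}$. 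The delicate point --- and what I expect to be the main obstacle --- is the sign: one must carry the $t$ newly created rows $y_\bullet^{p^{s_i}}$ (even) past the $k-t$ surviving rows $x_\bullet$ (odd) into the $y$-block, reorder the anticommuting $\tau_{s_i}$ into increasing order, and combine all of this with $\mathrm{sgn}(\sigma)$, then check the net sign is exactly $(-1)^{t(k-t)}$; I would do this by induction on $t$, the inductive step consisting of moving one new row across the $k-t-1$ remaining $x$-rows. As a consistency check, when $k=n$ there are no $y$-rows, $R$ is forced to be $0$, and the statement reduces to M\`ui's formula for $St^{S,R}(x_1\cdots x_n)$ in \cite{mui2}.
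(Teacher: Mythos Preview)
Your coaction approach is correct in outline and is morally the same computation as the paper's, but packaged differently. The paper works with M\`ui's map $d_m^*P_m:H^*(X)\to H^*(B(\mathbb Z/p)^m)\otimes H^*(X)$, in which the monomials $\tilde M_{m,s_1}\cdots\tilde M_{m,s_t}\tilde L_m^{r_0}Q_{m,1}^{r_1}\cdots Q_{m,m-1}^{r_{m-1}}$ stand in for your $\tau_S\xi^R$ (this is exactly the content of M\`ui's Theorem~\ref{dlm}). Rather than the full permutation expansion, the paper uses the Laplace development $[k;e_{k+1},\ldots,e_n]=\sum_I\mathrm{sign}\,\sigma_I\,x_I\,[e_{k+1},\ldots,e_n]_I$, applies $d_m^*P_m$ to $x_I$ by quoting M\`ui's formula for $d_m^*P_m(x_1\cdots x_k)$ from \cite{mui2} (Proposition~\ref{md3}(iii) here, which already contains the sign $(-1)^{t(k-t)}$), applies it to $[e_{k+1},\ldots,e_n]_I$ by the same Frobenius argument you give, and then reassembles via Laplace. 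The payoff of the paper's route is that \emph{all} of the odd-degree sign bookkeeping is absorbed into M\`ui's prior result; your route is more self-contained but must reproduce that work.

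On that point, your sign argument is only a sketch, and the sketch is not quite right as stated. The sentence ``the $\binom kt$ choices \ldots\ and the $t!$ matchings \ldots\ contribute equal terms up to sign'' is misleading: for a fixed $\sigma$ these choices do not give the same monomial on the $H^*$ side, so there is nothing to add up at that stage; the counting only becomes meaningful after you also sum over $\sigma$ and identify the result with the permutation expansion of the target bideterminant. Moreover, the Koszul signs arise from commuting each $\tau_{s_i}$ (odd) past the subsequent factors $x_{\sigma(a)}\otimes 1$ (odd) in the graded tensor product, not from ``moving even rows past odd rows'' in a matrix; your proposed inductive step with ``$k-t-1$ remaining $x$-rows'' does not match what actually has to be commuted. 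If you want to keep your approach, you should carry out this sign computation in full (it is routine but requires care); alternatively, do as the paper does and split off the $x$-part via Laplace so that M\`ui's formula for $St^{S,0}(x_1\cdots x_k)$ hands you $(-1)^{t(k-t)}$ for free.
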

 We have also the following relation from which we can explicitly compute $St^{S,R}[k;e_{k+1},  \ldots,  e_n]$ in terms of Dickson and M\`ui invariants.
\begin{prop}\label{md2} For $0\leqslant k \leqslant n$,
$$[k;e_{k+1},  \ldots,e_{n-1},  e_n+n] = \sum_{s=0}^{n-1}(-1)^{n+s-1}[k;e_{k+1},  \ldots,e_{n-1}, e_n+s]Q_{n,s}^{p^{e_n}}.$$
\end{prop}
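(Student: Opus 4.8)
The plan is to expand the invariant on the left along its bottom row and then invoke the fundamental relation satisfied by the Dickson invariants. Recall from \cite{mui1} that, for any exponent $f$, $[k;e_{k+1},\ldots,e_{n-1},f]$ is the (suitably interpreted) determinant of the $n\times n$ array whose first $k$ rows are all $(x_1,\ldots,x_n)$, whose next $n-k-1$ rows are $\big(y_i^{p^{e_{k+1}}}\big),\ldots,\big(y_i^{p^{e_{n-1}}}\big)$, and whose last row is $\big(y_1^{p^{f}},\ldots,y_n^{p^{f}}\big)$. Since the entries $y_i^{p^{f}}$ are of even degree, hence central, Laplace expansion along this last row is valid in the sense of \cite{mui1}, producing cofactors $C_1,\ldots,C_n$ assembled from the other $n-1$ rows only, so that
\begin{equation*}
[k;e_{k+1},\ldots,e_{n-1},f] \;=\; \sum_{j=1}^{n} C_j\, y_j^{p^{f}}.
\end{equation*}
The crucial point is that $C_1,\ldots,C_n$ do not involve $f$, so the very same cofactors appear for every choice of bottom row.

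Next I would record the Dickson relation. Expanding the $(n+1)\times(n+1)$ determinant with rows $\big(X^{p^{s}},y_1^{p^{s}},\ldots,y_n^{p^{s}}\big)$, $0\leqslant s\leqslant n$, along its first column and dividing by $L_n$ gives $\sum_{s=0}^{n}(-1)^{s}Q_{n,s}X^{p^{s}}$; specialising $X$ to any $y_j$ makes the array have two equal columns, so this vanishes, and since $Q_{n,n}=1$ it rearranges to
\begin{equation*}
y_j^{p^{n}} \;=\; \sum_{s=0}^{n-1}(-1)^{n+s-1}Q_{n,s}\,y_j^{p^{s}}\qquad\text{in }P(y_1,\ldots,y_n),\quad 1\leqslant j\leqslant n
\end{equation*}
(this may also simply be quoted from \cite{mui1}). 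Applying the $p^{e_n}$-th power map --- a ring endomorphism in characteristic $p$, which fixes the signs because $(-1)^{p^{e_n}}=-1$ as $p$ is odd --- yields $y_j^{p^{e_n+n}} = \sum_{s=0}^{n-1}(-1)^{n+s-1}Q_{n,s}^{p^{e_n}}\,y_j^{p^{e_n+s}}$ for each $j$.

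Substituting this, with $f=e_n+n$, into the Laplace expansion and pulling out the scalars $(-1)^{n+s-1}Q_{n,s}^{p^{e_n}}$ (independent of $j$, and polynomials in the $y_i$, hence central) gives
\begin{equation*}
[k;e_{k+1},\ldots,e_{n-1},e_n+n] \;=\; \sum_{s=0}^{n-1}(-1)^{n+s-1}Q_{n,s}^{p^{e_n}}\sum_{j=1}^{n}C_j\,y_j^{p^{e_n+s}}.
\end{equation*}
By the first displayed formula above with $f=e_n+s$, the inner sum equals $[k;e_{k+1},\ldots,e_{n-1},e_n+s]$, and the proposition follows.

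I expect no genuine obstacle here; the whole argument is bookkeeping. The two places to be careful are (i) unwinding the definition of $[k;\ldots]$ from \cite{mui1} enough to be sure that the bottom-row Laplace expansion really holds with cofactors independent of that row --- in particular that the repeated $x$-rows, which are untouched by the substitution, cause no trouble --- and (ii) tracking the signs through the Frobenius, which is exactly where the oddness of $p$ enters.
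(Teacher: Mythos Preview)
Your argument is correct and considerably more direct than the paper's. The only point to be sure of is the bottom-row Laplace expansion for the M\`ui determinant $[k;\ldots]$; but this follows at once from the formula $[k;e_{k+1},\ldots,e_n]=\sum_I\mathrm{sign}\,\sigma_I\,x_I\,[e_{k+1},\ldots,e_n]_I$ quoted in the paper (\cite{mui1}, I.4.2), after expanding each honest determinant $[e_{k+1},\ldots,e_{n-1},f]_I$ along its last row. Since $Q_{n,s}^{p^{e_n}}$ and the $y_j^{p^{e_n+s}}$ are of even degree, hence central, commuting them past the cofactors $C_j$ is legitimate, and the substitution goes through exactly as you describe.

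The paper, by contrast, first treats the case $k=0$ by a rather intricate induction on $n$: it proves \emph{simultaneously} the desired relation and an auxiliary relation for $[e_1,\ldots,e_{n-1},e_n+n-1]$ involving $V_n=L_n/L_{n-1}$ and the Dickson invariants $Q_{n-1,s}$ in one fewer variable, using the recursion $Q_{n,s}=Q_{n-1,s-1}^p+Q_{n-1,s}V_n^{p-1}$. Only afterwards does it reduce the case $k>0$ to $k=0$ via the expansion $[k;\ldots]=\sum(\pm)M_{n,s_1,\ldots,s_k}[s_1,\ldots,s_k,e_{k+1},\ldots,e_n]/L_n$ from \cite{mui1}, I.4.7. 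Your approach bypasses both the induction and the reduction, handling all $k$ at once; the paper's route stays entirely inside the algebra of Dickson--M\`ui invariants and incidentally produces the auxiliary $V_n$-relation, but that relation is not used elsewhere in the paper.
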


Using Theorem \ref{dl1} and Proposition \ref{md2} we explicitly compute the action of $St^{S,R}$ on M\`ui invariant $M_{n,s_1,\ldots,s_k}$ when $S,\ R$ are special. Particularly, we prove

\begin{thm}[H\uhorn ng and Minh \cite{hum}]\label{dl3} For $s_0 = -1 < s_1 < \ldots <s_k <  s_{k+1}= n$,
\begin{align*}
P^t&M_{n,s_1,\ldots,s_k} = \\
&\begin{cases}M_{n,t_1,\ldots,t_k}, &t = \underset{i=1}{\overset{k}\sum}\frac{p^{s_i}-p^{t_i}}{p-1}, \text{ with } s_{i-1} < t_i \leqslant s_i,\\
\underset{i=1}{\overset{k+1}\sum}(-1)^{k+1-i}M_{n,t_1,\ldots,\hat t_i\ldots, t_{k+1}}Q_{n,t_i}, &t = \underset{i=1}{\overset{k+1}\sum}\frac{p^{s_i}-p^{t_i}}{p-1}, \text{ with } s_{i-1} < t_i \leqslant s_i,\\ &1 \leqslant i \leqslant k+1, \ t_{k+1} < s_{k+1}=n,\\ 0, &\text{otherwise. }
\end{cases}
\end{align*}
\end{thm}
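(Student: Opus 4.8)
The plan is to feed $P^t=St^{\emptyset,(t)}$ into Theorem \ref{dl1} and then translate the resulting combinatorics into the language of M\`ui invariants. Recall that $M_{n,s_1,\ldots,s_k}=[k;e_{k+1},\ldots,e_n]$, where $e_{k+1}<\cdots<e_n$ is the increasing enumeration of $E:=\{0,1,\ldots,n-1\}\setminus\{s_1,\ldots,s_k\}$; in particular the $e_i$ are pairwise distinct, so Theorem \ref{dl1} applies. With $S=\emptyset$ and $R=(t)$ (so $m=1$), a decomposition $J=(J_0,J_1)$ of $\{k+1,\ldots,n\}$ satisfies $R=R_J$ exactly when $\sum_{j\in J_1}p^{e_j}=t$, and then $\Phi_J(i)=1$ for $i\in J_1$ and $\Phi_J(i)=0$ otherwise; moreover, since the $p^{e_j}$ are distinct powers of $p$, distinct subsets have distinct sums, so such a $J_1$ is unique when it exists. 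Thus Theorem \ref{dl1} gives $P^tM_{n,s_1,\ldots,s_k}=[k;\epsilon_{k+1},\ldots,\epsilon_n]$ with $\epsilon_i=e_i+1$ for $i\in J_1$ and $\epsilon_i=e_i$ otherwise (the sign $(-1)^{t(k-t)}$ being $1$ as $S=\emptyset$), and $P^tM_{n,s_1,\ldots,s_k}=0$ when no such $J_1$ exists.

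The next step is to decide when $[k;\epsilon_{k+1},\ldots,\epsilon_n]\neq0$, i.e. when the $\epsilon_i$ are pairwise distinct (else two rows of the defining determinant coincide). Partition $E$ into the blocks $B_i=\{s_{i-1}+1,\ldots,s_i-1\}$, $1\leqslant i\leqslant k+1$, where $s_0=-1$ and $s_{k+1}=n$. If $e$ and $e+1$ lie in the same block and $e\in J_1$ while $e+1\notin J_1$, then $e+1$ occurs twice among the $\epsilon_i$; conversely, if within each block $J_1$ meets it in a final segment $\{t_i,t_i+1,\ldots,s_i-1\}$ (with $s_{i-1}<t_i\leqslant s_i$, the case $t_i=s_i$ meaning $J_1\cap B_i=\emptyset$), then one checks directly that the $\epsilon_i$ are distinct and already increasing, that their set equals $\{0,1,\ldots,n\}\setminus\{t_1,\ldots,t_{k+1}\}$, and that $t=\sum_{i=1}^{k+1}(p^{s_i}-p^{t_i})/(p-1)$. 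Since every $t$ of this form arises from exactly one such $J_1$, the values of $t$ not of this form give $P^tM_{n,s_1,\ldots,s_k}=0$, which is the last case of the statement.

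It remains to identify $[k;\epsilon_{k+1},\ldots,\epsilon_n]$ in the two surviving regimes. If $t_{k+1}=n$, block $B_{k+1}$ is untouched, $t=\sum_{i=1}^{k}(p^{s_i}-p^{t_i})/(p-1)$, and the set of exponents is $\{0,\ldots,n-1\}\setminus\{t_1,\ldots,t_k\}$, so $P^tM_{n,s_1,\ldots,s_k}=M_{n,t_1,\ldots,t_k}$ — the first case. If $t_{k+1}<n$, the exponent $n$ occurs, so $P^tM_{n,s_1,\ldots,s_k}=[k;u_1,\ldots,u_{n-k-1},n]$ where $u_1<\cdots<u_{n-k-1}$ enumerate $\{0,\ldots,n-1\}\setminus\{t_1,\ldots,t_{k+1}\}$. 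Apply Proposition \ref{md2} with $e_n=0$ (so $e_n+n=n$): in the sum $\sum_{s=0}^{n-1}(-1)^{n+s-1}[k;u_1,\ldots,u_{n-k-1},s]Q_{n,s}^{p^{0}}$, a term vanishes unless $s\notin\{u_j\}$, i.e. unless $s=t_i$ for some $i$, and for $s=t_i$, sorting the exponents of $[k;u_1,\ldots,u_{n-k-1},t_i]$ into increasing order produces $\pm M_{n,t_1,\ldots,\hat t_i,\ldots,t_{k+1}}$. The sign is the parity of the number of $u_j$ exceeding $t_i$, namely $(-1)^{n-k-2+i-t_i}$, which multiplied by $(-1)^{n+t_i-1}$ from Proposition \ref{md2} gives $(-1)^{k+1-i}$; together with $Q_{n,t_i}^{p^0}=Q_{n,t_i}$ this is exactly the second case.

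The main obstacle I expect is the sign bookkeeping in this last paragraph: one must pin down the permutation restoring the increasing order in $[k;u_1,\ldots,u_{n-k-1},t_i]$ and verify that it cancels the sign from Proposition \ref{md2} uniformly in $i$, including the boundary value $t_1=0$ (where $Q_{n,0}=L_n^{p-1}$). The only other point needing care is the observation — used silently above — that distinct subsets of $\{p^{e_{k+1}},\ldots,p^{e_n}\}$ have distinct sums, which is what makes $P^t$ act through a single determinant and makes the three cases of the statement mutually exclusive and exhaustive.
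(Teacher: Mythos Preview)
Your proposal is correct and follows essentially the same route as the paper: apply Theorem~\ref{dl1} with $S=\emptyset$, $R=(t)$ to produce a single determinant $[k;\epsilon_{k+1},\ldots,\epsilon_n]$, identify the surviving shapes as $[k;0,\ldots,\hat t_1,\ldots,\hat t_{k+1},\ldots,n]$ with $s_{i-1}<t_i\leqslant s_i$, split on whether $t_{k+1}=n$, and in the case $t_{k+1}<n$ reduce via Proposition~\ref{md2}. The paper states the outcome of the first step without justification; your block analysis of $E=\coprod B_i$ and the verification that $J_1$ must meet each block in a final segment supply exactly the combinatorics the paper omits, and your sign count $(-1)^{n-k-2+i-t_i}\cdot(-1)^{n+t_i-1}=(-1)^{k+1-i}$ is correct and again more explicit than the paper's treatment.
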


\section*{Acknowledgment} 
 I would like to thank Professor Hu\`ynh M\`ui for his generous help and inspiring guidance.

\section{Proof of Theorem \ref{dl1}}

First we recall M\`ui's results on the homomorphism $d_m^*P_m$ and the operations $St^{S,R}$.

Let $\mathcal A_{p^m}$ be the alternating group on $p^m$ letters. Suppose that $X$ is a topological space, $W\mathcal A_{p^m}$ is a contractible $\mathcal A_{p^m}$-free space. Then we have the Steenrod power map
$$P_m: H^q(X) \longrightarrow H^{p^mq}\big(W\mathcal A_{p^m}\underset{\mathcal A_{p^m}}\times X^{p^m}\big),$$
which sends $u$ to $1\otimes u^{p^m}$ at the cochain level (see [6; Chap. VII]).

The inclusion $(\mathbb Z/p)^n \subset \mathcal A_{p^m}$ together with the diagonal map $X\to X^{p^m}$ and the K\"unneth formula induces the homomorphism
$$d_m :H^{*}\big(W\mathcal A_{p^m}\underset{\mathcal A_{p^m}}\times X^{p^m}\big) \longrightarrow H^*(B(\mathbb Z/p)^n)\otimes H^*(X).$$
Set $\tilde M_{m,s} = M_{m,s}L_m^{h-1}, \ 0\leqslant s < m,\ \tilde L_m = L_m^h, \ h = (p-1)/2$. We have
\begin{thm}[M\`ui {[3; 1.3]}]\label{dlm} Let $u\in H^q(X),\ \mu(q) = (-1)^{hq(q-1)/2}(h!)^q$. Then
$$d_m^*P_m(u) = \mu(q)^m\sum_{S,R}(-1)^{r(S,R)}\tilde M_{m,s_1}\ldots \tilde M_{m,s_t}\tilde L_m^{r_0}Q_{m,1}^{r_1}\ldots Q_{m,m-1}^{r_{m-1}}\otimes St^{S,R}u.$$
Here the summation runs over all $(S,R)$ with $S = (s_1,\ldots,s_t)$, $0\leqslant s_1 < \ldots < s_t<m$, $R= (r_1, \ldots, r_m)$, $r_0 = q-t-2(r_1+\ldots +r_m)\geqslant 0$, $r(S,R) = t + s_1 +\ldots + s_t + r_1 + 2r_2 + \ldots + mr_m$.
\end{thm}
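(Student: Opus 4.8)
The plan is to prove the identity by induction on $m$, reducing first by naturality to a universal case and taking the classical cyclic-group computation as the base. Since $P_m$ and $d_m$ are natural in the pair $(X,u)$, it suffices to prove the formula for $X=K(\mathbb Z/p,q)$ with $u=\iota_q$ the fundamental class, the general statement then following by pulling back along a classifying map for $u$. For $u=\iota_q$ the class $d_m^*P_m(\iota_q)$ lies in $H^*(B(\mathbb Z/p)^m)\otimes H^*(K(\mathbb Z/p,q))$; since $P_m(\iota_q)$ is $\mathcal A_{p^m}$-equivariant and $SL_m=SL(m,\mathbb Z/p)$ embeds in $\mathcal A_{p^m}$ inside the normaliser of the translation subgroup $(\mathbb Z/p)^m$, the first tensor factor lands in $H^*(B(\mathbb Z/p)^m)^{SL_m}$, which by M\`ui's freeness theorem is a free module over the Dickson algebra $P(L_m,Q_{m,1},\ldots,Q_{m,m-1})$. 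One knows inductively that $d_m^*P_m(\iota_q)$ is a sum $\sum_{S,R}c_{S,R}\otimes St^{S,R}\iota_q$ over the single operations $St^{S,R}$ (dual to the Milnor monomials $\tau_S\xi^R$), with each $c_{S,R}$ in this module, and the content of the theorem is the identification $c_{S,R}=\mu(q)^m(-1)^{r(S,R)}\tilde M_{m,s_1}\cdots\tilde M_{m,s_t}\tilde L_m^{r_0}Q_{m,1}^{r_1}\cdots Q_{m,m-1}^{r_{m-1}}$ for the $(S,R)$ described, all other coefficients being zero.

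For $m=1$ this is Steenrod's computation of the $p$-fold power construction restricted to $\mathbb Z/p\subset\mathcal A_p$ (a $p$-cycle being an even permutation, $p$ odd): with $H^*(B\mathbb Z/p)=E(x_1)\otimes P(y_1)$ one has $\tilde L_1=y_1^{h}$, $\tilde M_{1,0}=x_1y_1^{h-1}$, $St^{\emptyset,(r_1)}=P^{r_1}$ and $St^{(0),(r_1)}$ dual to $\tau_0\xi_1^{r_1}$, and the statement becomes the classical expansion of $d_1^*P_1(u)$; here $\mu(q)=(-1)^{hq(q-1)/2}(h!)^q$ records the $h!$-normalisation of $H^*(B\mathbb Z/p)$ together with the sign from permuting the $p$ tensor factors of $u^{\otimes p}$. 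For the inductive step I would factor $(\mathbb Z/p)^m\hookrightarrow\mathcal A_{p^m}$ through an iterated wreath product so that the $p^m$-th power construction factors as $P_1$, in a new $m$-th coordinate, applied to $P_{m-1}$; then $d_m^*P_m(u)$ is recovered from $d_{m-1}^*P_{m-1}(u)=\mu(q)^{m-1}\sum(-1)^{r(S',R')}(\text{Dickson-M\`ui monomial in }x_i,y_i,\ i<m)\otimes St^{S',R'}u$ by applying $d_1^*P_1$ in the last coordinate. This requires the $m=1$ formula applied to each $St^{S',R'}u$ (the outputs recombining into the $St^{S,R}u$ via the product formula in $\mathcal A(p)^*$) together with the effect of $d_1^*P_1$ on the Dickson-M\`ui coefficient monomials, which is computed from the elementary action of the Steenrod operations on $L_{m-1}$, the $Q_{m-1,i}$ and the $M_{m-1,s}$ under the stabilisation $(\mathbb Z/p)^{m-1}\subset(\mathbb Z/p)^m$. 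Collecting terms should give the closed form: the power $\mu(q)^m$ and the single sign $(-1)^{r(S,R)}$ compound over the $m$ stages (each $\mathbb Z/p$-factor contributing $\mu(q)$, as it acts on the original $u$), the condition $r_0=q-t-2\sum r_i\geqslant0$ is forced by total degree, and $0\leqslant s_1<\cdots<s_t<m$ reflects that each stage contributes at most its single new exterior generator, in bijection with $\tau_{m-1}$.

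The step I expect to be the main obstacle is the bookkeeping in the inductive passage: one must verify that the coefficients produced are \emph{precisely} the monomials $\tilde M_{m,s_1}\cdots\tilde M_{m,s_t}\tilde L_m^{r_0}Q_{m,1}^{r_1}\cdots Q_{m,m-1}^{r_{m-1}}$, and not other elements of the free Dickson module, and that every would-be term violating $s_1<\cdots<s_t$ or $r_0\geqslant0$ cancels. This rests on determinant identities controlling how a new $[1;\ldots]$-type factor and the new powers $y_m^{p^{j}}$ assemble into $\tilde M_{m,s}$, $\tilde L_m$ and $Q_{m,i}$, on scrupulous tracking of the graded-commutativity signs so that a single exponent $r(S,R)$ results, and on the freeness over the Dickson algebra, which is what legitimises comparing coefficients at the end. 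A fallback route avoiding explicit wreath products is to determine each $c_{S,R}$ by evaluating the $H^*(K(\mathbb Z/p,q))$-component against the basis dual to the Milnor basis of $\mathcal A(p)$, using the already-known value of $P^j\iota_q$ (the case $S=\emptyset$) as an anchor; but this still needs the same structural input on the Dickson-M\`ui invariants.
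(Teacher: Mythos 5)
Before assessing the details, note that the paper itself does not prove Theorem \ref{dlm}: it is recalled verbatim from M\`ui \cite{mui2} (the citation ``[3; 1.3]'') and is used as a black box throughout, so there is no internal proof to compare yours against. Your overall strategy --- reduce by naturality to a universal class, take the $m=1$ case from the Steenrod--Epstein analysis of the $p$-th power construction \cite{ste} (with $\mu(q)$ the usual normalisation), and induct on $m$ through the wreath-product factorisation of $P_m$ as $P_1$ applied to $P_{m-1}$ --- is the classical route and is in the spirit of how M\`ui develops the theory, so the approach is not wrong in outline.

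However, the load-bearing steps are exactly where your sketch has genuine gaps. First, the a priori expansion $d_m^*P_m(\iota_q)=\sum_{S,R}c_{S,R}\otimes St^{S,R}\iota_q$ with well-defined coefficients is not available: the classes $St^{S,R}\iota_q$ satisfy unstable relations in $H^*(K(\mathbb Z/p,q))$ (e.g.\ excess conditions) and are not linearly independent, so ``comparing coefficients'' cannot be legitimised by freeness of $H^*(B(\mathbb Z/p)^m)^{SL_m}$ over the Dickson algebra --- that freeness concerns the wrong tensor factor. The correct bookkeeping expands $d_m^*P_m(u)$ along a linear basis of $H^*(B(\mathbb Z/p)^m)$ via the K\"unneth theorem (which does give unique coefficients in $H^*(X)$), and then one must prove that the resulting coefficient operations are stable and coincide with Milnor's dual basis elements $St^{S,R}$; that identification is the real content of the theorem and your outline does not say how it is made. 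Second, the two inputs you invoke in the inductive step --- re-expanding the composites of the $m=1$ operations with $St^{S',R'}$ in the Milnor basis ``via the product formula in $\mathcal A(p)^*$'', and computing $d_1^*P_1$ on the level-$(m-1)$ Dickson--M\`ui coefficient monomials --- are each at least as substantial as the statement being proved. The latter is essentially the computation of the Steenrod action on Dickson and M\`ui invariants, i.e.\ the kind of result (\cite{hum}, and Theorem \ref{dl1} of this paper) that is normally \emph{deduced from} Theorem \ref{dlm}, so citing it there would be circular unless you prove it independently at level $m-1$. Until these steps are carried out, the proposal is a plausible plan rather than a proof.
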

\begin{prop}[M\`ui \cite{mui1,mui2}]\label{md3}\

\smallskip
{\rm i)} $d_m^*P_m$ is a natural homomorphism preserving cup product up to a sign. Precisely,
$$d_m^*P_m(uv) = (-1)^{mhqr}d_m^*P_mud_m^*P_mv,$$
with $q =\dim u,\ r = \dim v$.

{\rm ii)} $d_m^*P_my_i = \underset{s=0}{\overset{m}\sum}(-1)^{m+s}Q_{m,s}\otimes y_i^{p^s}$.

\eject
{\rm iii)} $d_m^*P_m(x_1\ldots x_n) =$

$\mu(n)^m\underset{0\leqslant s_1 < \ldots < s_t<m}\sum (-1)^{t(n-t) + r(S,0)}\tilde M_{m,s_1}\ldots \tilde M_{m,s_t}\tilde L_m^{n-t}\otimes [n-t,s_1, \ldots ,s_t].$

\medskip\noindent Here $x_i$ and $y_i$ are defined as in the introduction.
\end{prop}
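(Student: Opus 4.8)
The plan is to derive Proposition~\ref{md2} from the single ``characteristic equation'' satisfied by the linear forms $y_i$ over the Dickson algebra, combined with the multilinearity of the bracket $[k;e_{k+1},\ldots,e_n]$ in its $y$-rows. The observation driving the proof is that the left-hand side and the $s$-th summand on the right differ only in the bottom $y$-row of the defining determinant, namely the row $(y_1^{p^{e_n+n}},\ldots,y_n^{p^{e_n+n}})$ versus $(y_1^{p^{e_n+s}},\ldots,y_n^{p^{e_n+s}})$. Hence it suffices to rewrite each $y_j^{p^{e_n+n}}$ as a linear combination of the $y_j^{p^{e_n+s}}$, $0\leqslant s\leqslant n-1$, with coefficients that are invariants independent of the column index $j$, and then expand the determinant along that single row.

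First I would record the Dickson fundamental identity. For the space $V=\langle y_1,\ldots,y_n\rangle$ consider $f(X)=\prod_{v\in V}(X-v)$. Dickson's theorem, in the normalization fixed in the introduction ($Q_{n,n}=1$, $Q_{n,0}=L_n^{p-1}$), gives
\[
f(X)=\sum_{s=0}^{n}(-1)^{n-s}Q_{n,s}X^{p^{s}}.
\]
Since every element of $V$ is a root of $f$, evaluating at $X=y_i$ and isolating the top term yields
\[
y_i^{p^{n}}=\sum_{s=0}^{n-1}(-1)^{n+s-1}Q_{n,s}\,y_i^{p^{s}},\qquad 1\leqslant i\leqslant n,
\]
which one may verify directly when $n=1$, where $f(X)=X^{p}-L_1^{p-1}X$ with $L_1=y_1$.

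Next I would apply the $p^{e_n}$-th power map to this relation. As $p$ is odd, $p^{e_n}$ is odd, so $(-1)^{p^{e_n}}=-1$ and the signs $(-1)^{n+s-1}$ are preserved; using the additivity and multiplicativity of Frobenius in characteristic $p$ we obtain
\[
y_i^{p^{e_n+n}}=\sum_{s=0}^{n-1}(-1)^{n+s-1}Q_{n,s}^{p^{e_n}}\,y_i^{p^{e_n+s}},\qquad 1\leqslant i\leqslant n.
\]
Substituting this entrywise into the bottom $y$-row of the determinant defining $[k;e_{k+1},\ldots,e_{n-1},e_n+n]$ and using linearity of the bracket in that row — the coefficients $(-1)^{n+s-1}Q_{n,s}^{p^{e_n}}$ being scalars common to all columns, and the factor $1/k!$ together with the fixed $x$-rows being identical throughout — distributes the bracket over the sum and produces exactly
\[
\sum_{s=0}^{n-1}(-1)^{n+s-1}[k;e_{k+1},\ldots,e_{n-1},e_n+s]\,Q_{n,s}^{p^{e_n}},
\]
which is the asserted identity.

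The one delicate point, and the step I would treat most carefully, is the legitimacy of this row expansion: the symbol $[k;\ldots]$ is not an ordinary determinant but M\`ui's modified one, carrying $k$ exterior $x$-rows and the normalization $1/k!$. I must therefore confirm, from its precise definition in \cite{mui1}, that it is genuinely $\mathbb Z/p$-multilinear in each $y$-power row. This is exactly the form of linearity I use, since throughout the argument the $x$-rows are held fixed and only the single bottom $y$-row is altered, with coefficients taken from the (column-independent) invariants $Q_{n,s}^{p^{e_n}}$. Granting this multilinearity, the three displayed lines establish the proposition with no further computation.
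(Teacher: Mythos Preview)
Your proposal addresses Proposition~\ref{md2}, not the displayed Proposition~\ref{md3} (the latter is a result of M\`ui recalled without proof in the paper). Taking Proposition~\ref{md2} as the intended target, your argument is correct and follows a genuinely different, more direct route than the paper's.

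The paper first treats $k=0$ by a simultaneous induction on $n$ together with an auxiliary identity involving $V_n=L_n/L_{n-1}$, using Laplace expansion along the last column and the Dickson recursion $Q_{n,s}=Q_{n-1,s-1}^p+Q_{n-1,s}V_n^{p-1}$; it then reduces general $k$ to $k=0$ via M\`ui's formula [2;~I.4.7]. You instead invoke the Dickson characteristic polynomial $\prod_{v\in V}(X-v)=\sum_{s=0}^{n}(-1)^{n-s}Q_{n,s}X^{p^{s}}$ once, evaluate at each $y_i$, Frobenius-raise, and expand the defining determinant along its bottom $y$-row. This handles all $k$ simultaneously and is considerably shorter; the paper's inductive approach is more self-contained (it does not appeal to the closed form of the Dickson polynomial) and produces the auxiliary $V_n$-identity as a byproduct. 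The one point you flag as delicate---multilinearity of the M\`ui bracket in a $y$-row with column-independent coefficients in $P(y_1,\ldots,y_n)$---is easily justified by the very decomposition the paper quotes as [2;~I.4.2], namely $[k;e_{k+1},\ldots,e_n]=\sum_I\operatorname{sign}\sigma_I\,x_I\,[e_{k+1},\ldots,e_n]_I$, since each $[e_{k+1},\ldots,e_n]_I$ is an ordinary determinant in even-degree variables and the coefficients $Q_{n,s}^{p^{e_n}}$ are central.
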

\begin{lem}\label{bd3} If $e_i \ne e_j$ for $i \ne j$, then
\begin{multline*}d_m^*P_m[e_{1},\ldots , e_n]\\ 
= \sum_{J = (J_0,\ldots,J_m)}(-1)^{mn + r(\emptyset,R_J)}\tilde L_m^{2r_{J_0}}Q_{m,1}^{r_{j_1}}\ldots Q_{m,m-1}^{r_{j_{m-1}}}\\ \otimes [e_1 + \Phi_J(1),\ldots,e_n + \Phi_J(n)],
\end{multline*}
where $R_J$ and $\Phi_J$ are defined as  in Theorem \ref{dl1}.
\end{lem}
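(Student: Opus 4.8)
The plan is to compute $d_m^*P_m[e_1,\ldots,e_n]$ directly from the determinant definition, using the multiplicative property of $d_m^*P_m$ from Proposition \ref{md3}(i) together with the formula for $d_m^*P_m y_i$ in part (ii). Since $[e_1,\ldots,e_n] = \det(y_i^{p^{e_j}})$, I would first recall that $y_i^{p^{e_j}} = (y_i)^{p^{e_j}}$ and that, because $d_m^*P_m$ sends $u$ to $1\otimes u^{p^m}$ at the cochain level on degree-$2$ classes, it behaves well on $p$-th powers; concretely, from (ii) one gets $d_m^*P_m(y_i^{p^{e_j}}) = \sum_{s=0}^m (-1)^{m+s}Q_{m,s}^{p^{e_j}}\otimes y_i^{p^{e_j+s}}$ (raising the identity in (ii) to the $p^{e_j}$-th power, valid since everything is in characteristic $p$ and $Q_{m,s}$, $y_i^{p^s}$ commute). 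Because $y_i$ has even degree, the sign $(-1)^{mhqr}$ in the Cartan-type formula is trivial, so $d_m^*P_m$ is genuinely multiplicative on the polynomial part $P(y_1,\ldots,y_n)$.

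Next I would expand the determinant $[e_1,\ldots,e_n] = \sum_{\sigma\in \Sigma_n}\mathrm{sgn}(\sigma)\prod_{i=1}^n y_{\sigma(i)}^{p^{e_i}}$, apply $d_m^*P_m$ termwise, and substitute the expression above for each factor. Each factor $y_{\sigma(i)}^{p^{e_i}}$ contributes a sum over a choice $s_i \in \{0,1,\ldots,m\}$; collecting these choices over $i=1,\ldots,n$ amounts exactly to choosing a function from $\{1,\ldots,n\}$ to $\{0,\ldots,m\}$, i.e. an ordered partition $J = (J_0,\ldots,J_m)$ with $\Phi_J(i) = s_i$. For a fixed $J$, the coefficient in the first tensor factor is $\prod_{i=1}^n (-1)^{m+\Phi_J(i)} Q_{m,\Phi_J(i)}^{p^{e_i}} = (-1)^{mn}\prod_{s=0}^m Q_{m,s}^{\,\sum_{i\in J_s}p^{e_i}} = (-1)^{mn}\prod_{s=0}^m Q_{m,s}^{r_{J_s}}$, which is independent of $\sigma$ and can be pulled outside the sum over $\sigma$. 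The remaining sum over $\sigma$ of $\mathrm{sgn}(\sigma)\prod_i y_{\sigma(i)}^{p^{e_i+\Phi_J(i)}}$ reassembles precisely into the determinant $[e_1+\Phi_J(1),\ldots,e_n+\Phi_J(n)]$.

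Then I would simplify the coefficient $(-1)^{mn}\prod_{s=0}^m Q_{m,s}^{r_{J_s}}$ into the form stated in the lemma. Recall $Q_{m,0}=L_m^{p-1}$, so $Q_{m,0}^{r_{J_0}} = L_m^{(p-1)r_{J_0}} = L_m^{2hr_{J_0}} = \tilde L_m^{2r_{J_0}}$ using $\tilde L_m = L_m^h$ and $h=(p-1)/2$; this accounts for the $\tilde L_m^{2r_{J_0}}$ term and explains why no $Q_{m,0}$ or $Q_{m,m}$ appears (note $Q_{m,m}=1$, so the $s=m$ factor $Q_{m,m}^{r_{J_m}}=1$ also drops). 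The sign $r(\emptyset,R_J)$ with $R_J = (r_{J_1},\ldots,r_{J_m})$ is $r(\emptyset, R_J) = r_{J_1} + 2r_{J_2} + \ldots + m r_{J_m}$; since $Q_{m,0}$ and $L_m$ differ only by the sign-free power $p-1$ (even), and the convention $r(S,R)$ was designed in Theorem \ref{dlm} precisely so that $\tilde L_m^{r_0}\prod Q_{m,s}^{r_s}$ carries the sign $(-1)^{r(S,R)}$, I would check that $(-1)^{mn}\prod_{s=0}^m Q_{m,s}^{r_{J_s}} = (-1)^{mn+r(\emptyset,R_J)}\tilde L_m^{2r_{J_0}}Q_{m,1}^{r_{J_1}}\cdots Q_{m,m-1}^{r_{J_{m-1}}}$ by comparing against the normalization in M\`ui's theorem — this is a bookkeeping identity, not a computation with content.

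The main obstacle I anticipate is purely one of sign and index bookkeeping: verifying that the sign arising from the repeated use of (ii) (which has no $\sigma$-dependence, the good news) matches the $(-1)^{mn + r(\emptyset, R_J)}$ in the statement, and carefully justifying that raising the identity in (ii) to a $p^{e_j}$-th power is legitimate — i.e. that $d_m^*P_m$ really does commute with $p$-th powers in this setting, which follows because at cochain level it is $u\mapsto 1\otimes u^{p^m}$ and because in characteristic $p$ the Frobenius is additive, but deserves an explicit remark. Everything else is a mechanical expansion of a determinant and a reindexing of the resulting double sum by ordered set-partitions $J$. I would also note that the hypothesis $e_i\ne e_j$ for $i\ne j$ guarantees $e_i + \Phi_J(i)$ are the entries of a genuine (possibly unordered) M\`ui-type symbol, which is what makes the right-hand determinant the object one wants; without distinctness some terms could collapse, but the displayed formula is still the formal identity.
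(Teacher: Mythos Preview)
Your approach is essentially identical to the paper's: both expand the determinant over $\Sigma_n$, apply the multiplicativity of $d_m^*P_m$ on the even-degree polynomial part together with Proposition~\ref{md3}(ii) raised to $p^{e_j}$-th powers, reindex the resulting choices by ordered partitions $J=(J_0,\ldots,J_m)$, and then reassemble the $\sigma$-sum into the shifted determinant, finishing with $Q_{m,0}=\tilde L_m^{2}$. The only point you leave slightly implicit that the paper also glosses over is that the sign $(-1)^{\sum_i \Phi_J(i)}$ coincides with $(-1)^{r(\emptyset,R_J)}$ because $p$ is odd (so $p^{e_j}\equiv 1\pmod 2$ and $r_{J_s}\equiv |J_s|\pmod 2$); otherwise your bookkeeping is complete.
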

\begin{proof} Let $\Sigma_n$ be the symmetric group on $n$ letters. Then
$$[e_{1},\ldots ,e_n] = \sum_{\sigma \in \Sigma_n}\text{sign}\ \! \sigma\prod_{i=1}^ny_i^{p^{e_{\sigma(i)}}}.$$
From Proposition \ref{md2}, we have
\begin{align*}d_m^*P_m\Big(\prod_{i=1}^ny_i^{p^{e_{\sigma(i)}}}\Big) &=\prod_{i=1}^n \big(d_m^*P_my_i\big)^{p^{e_{\sigma(i)}}} \\ & =\prod_{i=1}^n \Big( \underset{s=0}{\overset{m}\sum}(-1)^{m+s}Q_{m,s}^{p^{e_{\sigma(i)}}}\otimes y_i^{p^{e_{\sigma(i)}+s}}\Big).
\end{align*}
Expanding this product and using the definitions of $\Phi_J, R_J$ and the assumption of the lemma, we get
$$d_m^*P_m\Big(\prod_{i=1}^ny_i^{p^{e_{\sigma(i)}}}\Big) =  \sum_{J }(-1)^{mn + r(\emptyset,R_J)}Q_{m,0}^{r_{J_0}}\ldots Q_{m,m-1}^{r_{j_{m-1}}} \otimes \prod_{i=1}^n  y_i^{p^{e_{\sigma(i)}+\Phi_J(\sigma(i))}}.$$
Hence, from the above equalities we obtain
\begin{align*}d_m^*P_m&[e_{1},\ldots , e_n]\\
&= \sum_{J }(-1)^{mn + r(\emptyset,R_J)}Q_{m,0}^{r_{J_0}}\ldots Q_{m,m-1}^{r_{j_{m-1}}} \otimes\sum_{\sigma\in \Sigma_n}\text{sign}\ \! \sigma \prod_{i=1}^n  y_i^{p^{e_{\sigma(i)}+\Phi_J(\sigma(i))}} \\
&= \sum_{J }(-1)^{mn + r(\emptyset,R_J)}Q_{m,0}^{r_{J_0}}\ldots Q_{m,m-1}^{r_{j_{m-1}}} \otimes [e_1 + \Phi_J(1),\ldots,e_n + \Phi_J(n)].
\end{align*} 
Since $Q_{m,0} = \tilde L_m^2$, the lemma is proved.
\end{proof}

\begin{proof}[{\bf 2.4. Proof of Theorem \ref{dl1}}] Let $I$ be a subset of $\{1,\ldots, n\}$ and $I'$ is its complement in $\{1,\ldots, n\}$. Writing $I = (i_1,\ldots, i_k)$ and $I' = (i_{k+1}, \ldots , i_n)$ with $i_1<\ldots < i_k$ and $i_{k+1}< \ldots < i_n$. We set $x_I = x_{i_1}\ldots x_{i_k}$, $[e_{k+1},\ldots,e_n]_I = [e_{k+1},\ldots,e_n](y_{i_{k+1}},\ldots,y_{i_n})$ and $\sigma_I = \begin{pmatrix}1 &\ldots &n\\ i_1&\ldots &i_n \end{pmatrix}\in \Sigma_n$. In [2; I.4.2], M\`ui showed that
$$[k;e_{k+1},\ldots , e_{n-1}, e_n] = \sum_{I}\text{sign}\ \!\sigma_I x_I [e_{k+1},\ldots,e_n]_I.$$
From Proposition \ref{md3} and Lemma \ref{bd3} we have
\begin{multline*} d_m^*P_m(x_I) = \mu(k)^m\underset{0\leqslant s_1 < \ldots < s_t<m}\sum (-1)^{t(k-t) + r(S,0)}\tilde M_{m,s_1}\ldots \tilde M_{m,s_t}\tilde L_m^{k-t}\\ \otimes [k-t,s_1, \ldots ,s_t]_I,
\end{multline*}
where $[k-t,s_1, \ldots ,s_t]_I =  [k-t,s_1, \ldots ,s_t](x_{i_1},\ldots ,x_{i_k},y_{i_1},\ldots ,y_{i_k})$,
\begin{multline*} d_m^*P_m[e_{k+1},\ldots,e_n]_I\\ = \sum_{J=(J_0,\ldots, J_m)}(-1)^{m(m-k) + r(\emptyset,R_J)}\tilde L_{m}^{2r_{J_0}}Q_{m,1}^{r_{j_{1}}}\ldots Q_{m,m-1}^{r_{j_{m-1}}} \otimes\\ [e_{k+1} + \Phi_J(k+1),\ldots,e_n + \Phi_J(n)]_I.
\end{multline*}
Set $q = \dim [k;e_{k+1},\ldots , e_n]  = k + 2(p^{e_{k+1}}+\ldots +p^{e_n}).$ An easy computation shows that $\mu(q) = (-1)^{n-k}\mu(k)$ and $r(S,0) + r(\emptyset,R) = r(S,R)$. Hence from Proposition \ref{md3} and the above equalities we get
\begin{multline*} d_m^*P_m[e_{k+1},\ldots,e_n]\\ = \mu(q)^m\sum_{S,J}(-1)^{t(t-k) + r(S,R_J)}M_{m,s_1}\ldots \tilde M_{m,s_t}\tilde L_{m}^{k-t+ 2r_{J_0}}Q_{m,1}^{r_{j_{1}}}\ldots Q_{m,m-1}^{r_{j_{m-1}}} \otimes\\ \sum_I\text{sign}\ \!\sigma_I[k-t,s_1, \ldots ,s_t]_I[e_{k+1} + \Phi_J(k+1),\ldots,e_n + \Phi_J(n)]_I.
\end{multline*}
Then, using the Laplace development we obtain
\begin{multline*} d_m^*P_m[e_{k+1},\ldots,e_n]\\ = \mu(q)^m\sum_{S,J}(-1)^{t(t-k) + r(S,R_J)}M_{m,s_1}\ldots \tilde M_{m,s_t}\tilde L_{m}^{k-t+ 2r_{J_0}}Q_{m,1}^{r_{j_{1}}}\ldots Q_{m,m-1}^{r_{j_{m-1}}} \otimes\\ [k-t,s_1, \ldots ,s_t,e_{k+1} + \Phi_J(k+1),\ldots,e_n + \Phi_J(n)].
\end{multline*}
Theorem \ref{dl1} now follows this equality and Theorem \ref{dlm}.
\end{proof}

\section{Proof of Proposition \ref{md2}}

First we prove the stated relation for $k = 0$,
\begin{align}
\label{ct6}[e_{1},\ldots , e_{n-1}, e_n+n] = \sum_{s=0}^{n-1}(-1)^{n+s-1}[e_{1},\ldots,e_{n-1}, e_n+s]Q_{n,s}^{p^{e_n}}.
\end{align}
We will prove \eqref{ct6} and the following relation together by induction on $n$,
\begin{align}&\notag[e_{1},\ldots,e_{n-1}, e_n+n-1] = \sum_{s=0}^{n-2}(-1)^{n+s}[e_{1},\ldots,e_{n-1}, e_n+s]Q_{n-1,s}^{p^{e_n}} \\ 
 &\label{ct5} \hskip6cm  + [e_{1},\ldots, e_{n-1}]V_n^{p^{e_n}}.
\end{align}
Here, $V_n = L_n/L_{n-1}$.

We denote \eqref{ct6} and \eqref{ct5} when $n = m$ by \ref{ct6}$(m)$ and \ref{ct5}$(m)$, respectively.

When $n=2$ the proof is straightforward. Suppose that $n > 2$ and that \ref{ct6}$(n-1)$ and \ref{ct5}$(n-1)$ are true.

By Laplace development and \ref{ct5}$(n-1)$ we have
\begin{align*}
&[e_{1},\ldots,e_{n-1}, e_n+n-1]\\ &= \sum_{t=1}^{n-1}(-1)^{n+t}[e_{1},\ldots,\hat e_t,\ldots, e_{n-1}, e_n+n-1] y_n^{p^{e_t}} + [e_1,\ldots,e_{n-1}]y_n^{p^{e_n+n-1}}  \\
& = \sum_{t=1}^{n-1}(-1)^{n+t}\Big(\sum_{s=0}^{n-2}(-1)^{n+s}[e_{1},\ldots,\hat e_t,\ldots, e_{n-1}, e_n+s]Q_{n-1,s}^{p^{e_n}}\Big) y_n^{p^{e_t}}\\
&\hskip7cm + [e_1,\ldots,e_{n-1}]y_n^{p^{e_n+n-1}}\\
&=\sum_{s=0}^{n-2}(-1)^{n+s}\Big( \sum_{t=1}^{n-1}(-1)^{n+t}[e_{1},\ldots,\hat e_t,\ldots, e_{n-1}, e_n+s]y_n^{p^{e_t}}\Big)Q_{n-1,s}^{p^{e_n}} \\
&\hskip7cm + [e_1,\ldots,e_{n-1}]y_n^{p^{e_n+n-1}}\\
& =\sum_{s=0}^{n-2}(-1)^{n+s}[e_{1},\ldots,e_{n-1}, e_n+s]Q_{n-1,s}^{p^{e_n}}\\
&\hskip4cm  + [e_1,\ldots,e_{n-1}]\sum_{s=0}^{n-1}(-1)^{n+s-1}Q_{n-1,s}^{p^{e_n}}y_n^{p^{e_n+s}}.
\end{align*}
Since $V_n=\sum_{s=0}^{n-1}(-1)^{n+s-1}Q_{n-1,s}y_n^{p^{s}}$ (see \cite{dic}, \cite{mui1}), \ref{ct5}$(n)$ is proved.

Now we prove \ref{ct6}$(n)$. From \ref{ct5}$(n)$ and the relation $Q_{n,s}=Q_{n-1,s-1}^p + Q_{n-1,s}V_n^{p-1}$ (see \cite{dic}, \cite{mui1}) we obtain
 \begin{align*}
[e_{1},&\ldots , e_{n-1}, e_n+n]\\
 &= \sum_{s=1}^{n-1}(-1)^{n+s-1}[e_{1},\ldots,e_{n-1}, e_n+s]Q_{n-1,s-1}^{p^{e_n+1}}\\
&\hskip3cm+ [e_1,\ldots,e_{n-1}]V_n^{p^{e_n+1}}
 \end{align*}
 \begin{align*}
&= \sum_{s=1}^{n-1}(-1)^{n+s-1}[e_{1},\ldots,e_{n-1}, e_n+s]Q_{n,s}^{p^{e_n}}\\
&\hskip3cm - [e_1,\ldots,e_{n-1},e_n+n-1]V_n^{(p-1)p^{e_n}}\\
& + \Big(\sum_{s=1}^{n-2}(-1)^{n+s}[e_1,\ldots,e_{n-1},e_n+s]Q_{n-1,s}^{p^{e_n}} \\
&\hskip3cm+ [e_1,\ldots,e_{n-1}]V_n^{p^{e_n}}\Big)V_n^{(p-1)p^{e_n}}.
\end{align*}

Combining this equality and \ref{ct5}$(n)$ we get
\begin{align*}
[e_{1},e_2,\ldots , e_{n-1}, e_n+n]
 &= \sum_{s=1}^{n-1}(-1)^{n+s-1}[e_{1},\ldots,e_{n-1}, e_n+s]Q_{n,s}^{p^{e_n}}\\
&- (-1)^{n}[e_1,\ldots,e_{n-1},e_n]Q_{n-1,0}^{p^{e_n}} V_n^{(p-1)p^{e_n}}.
\end{align*}
Since $Q_{n,0}=Q_{n-1,0}V_n^{p-1}$, the proof of \ref{ct6}$(n)$ is completed.
 
For $0 < k < n$, Proposition \ref{md2} follows from \eqref{ct6} and [2; I.4.7] which asserts that 
\begin{align*}&[k;e_{k+1},\ldots, e_n] =\\ & \ (-1)^{k(k-1)/2}\sum_{0\le s_1<\ldots <s_k}(-1)^{s_1+\ldots +s_k}M_{n,s_1, \ldots,s_k}[s_1, \ldots, s_k, e_{k+1} ,\ldots, e_n]/L_n.
\end{align*}
The proposition is completely proved.

\section{Some applications}

In this section, using Theorem \ref{dl1} and Proposition \ref{md2}, we prove Theorem \ref{dl3} and explicitly compute the action of $St^{S,R}$ on M\`ui invariant $M_{n,s_1,\ldots,s_k}$ when $S, R$ are special. First we prove Theorem \ref{dl3}. 
\begin{proof}[{\bf 4.1. Proof of Theorem \ref{dl3}}] Recall that $P^t = St^{\emptyset,(t)}$. From Theorem \ref{dl1} we have
\begin{align*} P^t&M_{n,s_1,\ldots,s_k} \\
&= \begin{cases} [k; 0,\ldots,\hat t_1,\ldots,\hat t_{k+1},\ldots, n], &t= \underset{i=1}{\overset{k+1}\sum}\frac{p^{s_i}-p^{t_i}}{p-1}, \text{ with }\\ & s_{i-1} < t_i \leqslant s_i, 1 \leqslant i \leqslant k+1,\\ 0, &\text{otherwise. }
\end{cases}
\end{align*}
If $t_{k+1} = s_{k+1} = n$, then $[k; 0,\ldots,\hat t_1,\ldots,\hat t_{k+1},\ldots, n] = M_{n,t_1,\ldots,t_k}$. Suppose $t_{k+1} < n$. By Proposition \ref{md2} we have
\begin{align*} [k; 0,\ldots,\hat t_1,&\ldots,\hat t_{k+1},\ldots, n] \\
&= \sum_{s=0}^{n-1}(-1)^{n+s-1}[k; 0,\ldots,\hat t_1,\ldots,\hat t_{k+1},\ldots,n-1, s]Q_{n,s}\\ &=  \sum_{s=0}^{n-1}(-1)^{k+1-i}M_{n,t_1,\ldots,\hat t_i,\ldots, t_{k+1}}Q_{n,t_i}
\end{align*}
Hence Theorem \ref{dl3} follows.
\end{proof}
\setcounter{thm}{1}
\begin{nota}\label{kh2} Denote by $S' : s_{k+1} <\dots < s_{n-1}$ the ordered complement  of a sequence $S: 1 \leqslant s_1<\ldots < s_k < n$ in $\{1, \ldots,n-1\}$. Set $\Delta_i = (0,\ldots, 1, \ldots, 0)$ with 1 at the $i$-th place $(1\leqslant i \leqslant n)$, $\Delta_0 = (0,\ldots, 0)$ and $R = (r_1,\ldots,r_n)$. Here, the length of $\Delta_i$ is $n$.
\end{nota}

The following was proved in M\`ui [3; 5.3] for $R= \Delta_0$.
\begin{prop}\label{md4} Set $s_0 = 0$. Under the above notations, we have
$$St^{S',R}M_{n,1,\ldots,n-1} = \begin{cases} (-1)^{(k-1)(n-1-k) + s_t-t}M_{s_0,\ldots,\hat s_t,\ldots,s_k}, &R = \Delta_{s_t},\\ \underset{t=0}{\overset{k}\sum}(-1)^{k(n-k) -t}M_{s_0,\ldots,\hat s_t,\ldots,s_k}Q_{n,s_t}, &R = \Delta_{n}, \\ 0, &\text{otherwise. }\end{cases} 
$$
\end{prop}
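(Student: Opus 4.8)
The plan is to realize $St^{S',R}M_{n,1,\ldots,n-1}$ as a coefficient in the expansion of $d_m^*P_m$ applied to a suitable invariant, just as in the proof of Theorem \ref{dl1}, and then to identify that coefficient via Theorem \ref{dl1} and Proposition \ref{md2}. The key observation is that $M_{n,1,\ldots,n-1} = [n-1; 0]$ — that is, the M\`ui invariant with index set $\{1,\ldots,n-1\}$ is precisely the invariant $[k; e_{k+1},\ldots,e_n]$ with $k = n-1$ and the single remaining exponent $e_n = 0$. More precisely, since $s_0 = 0 < s_1 < \dots < s_k < n$ with $S' : s_{k+1} < \dots < s_{n-1}$ the complement of $S$ in $\{1,\ldots,n-1\}$, one has $M_{n,1,\ldots,n-1} = [n-1; \emptyset\text{-type}]$ sitting inside the determinant picture; applying $St^{S',R}$ to it via Theorem \ref{dl1} with $k$ replaced by $n-1$ and $t$ replaced by the length of $S'$, namely $n-1-k$, should directly yield a $[k; s_{k+1},\ldots,s_{n-1}, \Phi_J(n)]$-type term when $R = R_J$ is admissible, and $0$ otherwise.

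First I would write out explicitly what Theorem \ref{dl1} gives: with $e_n = 0$ the only exponent, the admissible sequences $J = (J_0,\ldots,J_m)$ partition the single-element set $\{n\}$, so $R_J$ is one of the $\Delta_i$ (the $i$-th standard basis vector of length $n$, or $\Delta_0$), and $\Phi_J(n)$ is the unique block index containing $n$. Thus $St^{S',R}M_{n,1,\ldots,n-1}$ is, up to the sign $(-1)^{(n-1-k)(k)}$ coming from the $t(k-t)$ factor with $t = n-1-k$, equal to $[k; s_{k+1},\ldots,s_{n-1}, \Phi_J(n)]$ when $R = \Delta_{\Phi_J(n)}$, and $0$ for all other $R$. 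So the content reduces to (a) rewriting $[k; s_{k+1},\ldots,s_{n-1}, i]$ in terms of the stated M\`ui and Dickson invariants for each value $i = \Phi_J(n)$, and (b) matching the sign exponents with the claimed $(k-1)(n-1-k)+s_t-t$ and $k(n-k)-t$. For step (a): when $i \leqslant n-1$, the sequence $(s_{k+1},\ldots,s_{n-1},i)$, after reordering, is a sub-sequence of $\{0,1,\ldots,n-1\}$ missing exactly one element $s_t$ of $S = (s_1,\ldots,s_k)$ (since $i$ fills one of the gaps and $S' \cup S = \{1,\ldots,n-1\}$, with $s_0=0$ adjoined); reordering introduces the transposition sign that produces $s_t - t$ (roughly, the number of elements one must hop over), and the resulting invariant is $\pm M_{s_0,\ldots,\hat s_t,\ldots,s_k}$ — this is the $R = \Delta_{s_t}$ case. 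When $i = n$ (i.e. $R = \Delta_n$), the sequence $(s_{k+1},\ldots,s_{n-1},n)$ has top entry $n$, which is outside $\{0,\ldots,n-1\}$, so I invoke Proposition \ref{md2} with $e_n = 0$ to rewrite $[k;\ldots,n-1,n]$ as an alternating sum $\sum_{s=0}^{n-1}(-1)^{n+s-1}[k;\ldots,n-1,s]Q_{n,s}$; only the terms where $s$ fills a genuine gap survive (the others have a repeated exponent and vanish), producing the sum $\sum_{t=0}^{k}(-1)^{k(n-k)-t}M_{s_0,\ldots,\hat s_t,\ldots,s_k}Q_{n,s_t}$ after collecting signs.

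The main obstacle I anticipate is purely bookkeeping: tracking the three sources of signs — the $(-1)^{t(k-t)}$ from Theorem \ref{dl1} (with $t = n-1-k$), the reordering sign when inserting the new exponent $i$ into the decreasing list $s_{n-1} > \dots > s_{k+1}$ and then into the standard order, and the $(-1)^{n+s-1}$ weights from Proposition \ref{md2} in the $R=\Delta_n$ case — and verifying they combine to exactly $(-1)^{(k-1)(n-1-k)+s_t-t}$ and $(-1)^{k(n-k)-t}$ respectively. I would handle this by fixing a small example (say $n = 4$) to pin down the conventions, then doing the general sign computation carefully, keeping in mind that $M_{s_0,\ldots,\hat s_t,\ldots,s_k}$ is an invariant on the full variable set so no further permutation of variables is needed. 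The $R = \Delta_0$ case (the M\`ui result [3; 5.3]) should fall out as the specialization where $i = 0$ is forced, giving a consistency check. Everything else is a direct substitution into results already established, so there is no genuinely new idea required beyond recognizing $M_{n,1,\ldots,n-1}$ as an $[k;\dots]$-invariant and applying Theorem \ref{dl1} and Proposition \ref{md2}.
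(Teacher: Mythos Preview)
Your approach is essentially identical to the paper's: recognize $M_{n,1,\ldots,n-1}=[n-1;0]$, apply Theorem~\ref{dl1} (with the $k$ there equal to $n-1$ and $t=|S'|=n-1-k$) to obtain $(-1)^{k(n-1-k)}[k;s_{k+1},\ldots,s_{n-1},i]$ when $R=\Delta_i$ and $0$ otherwise, then reorder the exponents to identify the result as $\pm M_{n,s_0,\ldots,\hat s_t,\ldots,s_k}$ when $i=s_t$, and invoke Proposition~\ref{md2} when $i=n$. The paper likewise leaves the final sign bookkeeping as an ``easy to see'' step, so your outline matches both the strategy and the level of detail.
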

\begin{proof} Note that $M_{n,1,\ldots,n-1} = [n-1;0]$. From Theorem \ref{dl1} we obtain
$$St^{S',R}M_{n,1,\ldots,n-1} = \begin{cases} (-1)^{k(n-1-k)}[k;1,\ldots,\hat s_1,\ldots,\hat s_k,\ldots,n-1,i],\\
 \hskip1cm R = \Delta_{i},\text{ with } i = s_t,\ 0\leqslant t \leqslant k, \text{ or } i = n,\\
 0, \hskip.7cm\text{otherwise. }\end{cases} $$
It is easy to see that
$$[k;1,\ldots,\hat s_1,\ldots,\hat s_k,\ldots,n-1,s_t] = (-1)^{n-1-k+s_t-t}M_{s_0,\ldots,\hat s_t,\ldots,s_k}.$$
According to Proposition \ref{md2} we have
\begin{align*}[k;1,\ldots,&\hat s_1,\ldots,\hat s_k,\ldots,n-1,n]\\
 &= \sum_{s=0}^{n-1} (-1)^{n + s -1}[k;1,\ldots,\hat s_1,\ldots,\hat s_k,\ldots,n-1,s]Q_{n,s}\\
&= \sum_{t=0}^{k}(-1)^{k-t} M_{s_0,\ldots,\hat s_t,\ldots,s_k}Q_{n,s_t}.
\end{align*}
From this the proposition follows.
\end{proof}

By the same argument as given in the proof of Theorem \ref{dl3} and Proposition \ref{md4} we obtain the following results.

\begin{prop}\label{md5} Let $\Delta_i$ be as in \ref{kh2} and $s_0 = 0$. Then
$$St^{\emptyset,\Delta_i}M_{n,s_1,\ldots,s_k} = \begin{cases} (-1)^{s_t-t}M_{s_0,\ldots,\hat s_t,\ldots,s_k}, &s_1 > 0,\ i = s_t,\\ \underset{t=0}{\overset{k}\sum}(-1)^{n-t-1}M_{s_0,\ldots,\hat s_t,\ldots,s_k}Q_{n,s_t}, &s_1>0,\ i=n,\\
 0, &\text{otherwise.}\end{cases} 
$$
\end{prop}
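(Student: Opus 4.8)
\textbf{Proof proposal for Proposition \ref{md5}.}

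The plan is to mimic the proof of Proposition \ref{md4}, but now applied directly to a general M\`ui invariant $M_{n,s_1,\ldots,s_k} = [k;0,\ldots,\hat s_1,\ldots,\hat s_k,\ldots,n-1]$ rather than to the special one $M_{n,1,\ldots,n-1} = [n-1;0]$. The key observation is that $St^{\emptyset,\Delta_i}$ is governed by Theorem \ref{dl1} with $S = \emptyset$ and $R = \Delta_i$, so the only question is for which index sets $J = (J_0,\ldots,J_m)$ one has $R_J = \Delta_i$, i.e. $\sum_{j\in J_s}p^{e_j} = r_{\Delta_i,s}$ for all $s$. Here the exponents $e_{k+1},\ldots,e_n$ of $M_{n,s_1,\ldots,s_k}$ are exactly the elements of $\{0,1,\ldots,n-1\}\setminus\{s_1,\ldots,s_k\}$, each occurring once; since these are distinct powers of $p$, the base-$p$ digits force $J$ (hence $\Phi_J$) to be essentially unique: either all exponents stay put ($R = \Delta_0$, corresponding to $i$ not in the admissible range), or exactly one exponent $e_j$ is raised by $1$, which is possible precisely when $i = e_j + 1$ — and since the $e_j$ are the ``gaps'' of $\{s_1,\ldots,s_k\}$ in $\{0,\ldots,n-1\}$, raising $e_j = s_t$ to $s_t+1$ is only allowed when $s_{t}+1$ is itself a gap or equals $n$; but $s_{t+1} > s_t$ and the precise combinatorics collapse to $i = s_t$ (for $0 \leqslant t \leqslant k$) or $i = n$. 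This is where the hypothesis $s_1 > 0$ enters: it guarantees $0 = s_0$ is a gap so that the index $i = s_0 = 0$ behaves correctly and no degenerate overlap occurs.

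Concretely, first I would invoke Theorem \ref{dl1} with $t = 0$ to write
$$St^{\emptyset,\Delta_i}M_{n,s_1,\ldots,s_k} = \begin{cases} [k; 0,\ldots,\hat s_1,\ldots,\hat s_k,\ldots,n-1,i], & i = s_t,\ 0\leqslant t\leqslant k,\ \text{or } i = n,\\ 0, & \text{otherwise,}\end{cases}$$
valid when $s_1 > 0$ (so that the exponent list is $\{0,\ldots,n-1\}\setminus\{s_1,\ldots,s_k\}$ and the ``raise exactly one index'' analysis applies); when $s_1 = 0$ one of the exponents is already $0$, raising a different exponent or colliding with $0$ makes $R_J = \Delta_i$ unsolvable for the relevant $i$, giving $0$. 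Then, exactly as in Proposition \ref{md4}, for $i = s_t$ I would identify the bracket symbol: the sequence $(0,\ldots,\widehat{s_1},\ldots,\widehat{s_k},\ldots,n-1,s_t)$ is, after reordering, the complement of $\{s_0,\ldots,\hat s_t,\ldots,s_k\}$ in $\{0,\ldots,n-1\}$ with the sign of the transposition that moves $s_t$ into place; a routine permutation-sign count gives the factor $(-1)^{s_t - t}$ and hence $[k;\ldots,s_t] = (-1)^{s_t-t}M_{s_0,\ldots,\hat s_t,\ldots,s_k}$ (note the ambient $n$ is unchanged, so there is no $(-1)^{k(n-1-k)}$ factor as there was in Proposition \ref{md4}, where the source symbol was $[n-1;0]$ rather than $[k;\ldots]$).

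For the case $i = n$, I would apply Proposition \ref{md2} to reduce $[k;0,\ldots,\hat s_1,\ldots,\hat s_k,\ldots,n-1,n]$ to $\sum_{s=0}^{n-1}(-1)^{n+s-1}[k;0,\ldots,\hat s_1,\ldots,\hat s_k,\ldots,n-1,s]Q_{n,s}$; the brackets with $s \notin \{s_0,\ldots,s_k\}$ vanish because they contain a repeated exponent, and the surviving terms $s = s_t$ contribute $(-1)^{n+s_t-1}(-1)^{s_t-t}M_{s_0,\ldots,\hat s_t,\ldots,s_k}Q_{n,s_t} = (-1)^{n-t-1}M_{s_0,\ldots,\hat s_t,\ldots,s_k}Q_{n,s_t}$ (using $(-1)^{2s_t} = 1$), which is exactly the stated sum over $t$. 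The main obstacle I anticipate is nailing down the sign bookkeeping in the permutation argument of the second paragraph and confirming the claimed vanishing pattern when $s_1 = 0$ — both are elementary but error-prone; the structural steps (Theorem \ref{dl1}, then Proposition \ref{md2}) go through verbatim from the earlier proofs.
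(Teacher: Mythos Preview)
Your approach is exactly what the paper intends: it states only that Proposition \ref{md5} follows ``by the same argument as given in the proof of Theorem \ref{dl3} and Proposition \ref{md4}'', and your outline (apply Theorem \ref{dl1} with $S=\emptyset$, analyze which $J$ yield $R_J=\Delta_i$, reorder to identify a M\`ui invariant, and invoke Proposition \ref{md2} for $i=n$) is precisely that argument.

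Two small slips to clean up. First, the bracket coming out of Theorem \ref{dl1} is $[k;\,i,\,1,\ldots,\hat s_1,\ldots,\hat s_k,\ldots,n-1]$: the exponent $0$ is \emph{replaced} by $i$, not retained with $i$ appended, so your displayed expression $[k;0,\ldots,\hat s_1,\ldots,\hat s_k,\ldots,n-1,i]$ has one exponent too many. Moving $i$ from the first slot to its sorted position past the $s_t-t$ smaller exponents gives exactly your sign $(-1)^{s_t-t}$, so the final answer is unaffected. Second, your explanation of the vanishing for $s_1=0$ is inverted: when $s_1=0$ the exponent $0$ has been \emph{removed}, so no index has $e_j=0$ and hence $r_{J_i}=1$ is impossible for $i\geqslant 1$; that is why $R_J=\Delta_i$ has no solution.
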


The following proposition was proved by H\uhorn ng and Minh \cite{hum} for $s = 0$.

\begin{prop}\label{md6} For $0 \leqslant s \leqslant n$,
$$St^{(s),(0)}M_{n,s_1,\ldots,s_k} = \begin{cases} (-1)^{k+s_t-t}M_{s_0,\ldots,\hat s_t,\ldots,s_k}, &s= s_t,\\ 
\underset{i=1}{\overset{k}\sum}(-1)^{n+k+t+1}M_{s_1,\ldots,\hat s_t,\ldots,s_k}Q_{n,s_t}, &s=n,\\
 0, &\text{otherwise.}\end{cases} 
$$
\end{prop}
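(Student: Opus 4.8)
The strategy parallels the proofs of Theorem~\ref{dl3} and Proposition~\ref{md4}: substitute the determinantal description $M_{n,s_1,\ldots,s_k}=[k;0,\ldots,\hat s_1,\ldots,\hat s_k,\ldots,n-1]$ into Theorem~\ref{dl1}, and then use Proposition~\ref{md2} to reduce the exponent that can fall outside the range $\{0,\ldots,n-1\}$.

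First I would apply Theorem~\ref{dl1} with $S=(s)$, so $t=1$, and $R=(0)$. The key point is that $R=(0)=R_J$ forces $r_{J_i}=\sum_{j\in J_i}p^{e_j}=0$ for every $i\geqslant 1$, which is impossible for a non-empty $J_i$; hence the only admissible $J$ is the one with $J_0=\{k+1,\ldots,n\}$, for which $\Phi_J\equiv 0$. Consequently Theorem~\ref{dl1} yields
$$St^{(s),(0)}M_{n,s_1,\ldots,s_k}=(-1)^{k-1}\,[k-1;\,s,\,0,\ldots,\hat s_1,\ldots,\hat s_k,\ldots,n-1],$$
where the tail $0,\ldots,\hat s_1,\ldots,\hat s_k,\ldots,n-1$ denotes the increasingly ordered list $\{0,1,\ldots,n-1\}\setminus\{s_1,\ldots,s_k\}$.

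Next I would split into three cases according to $s$. If $s\notin\{s_1,\ldots,s_k\}$ and $s\leqslant n-1$, then $s$ already occurs among the tail exponents, so two $y$-rows of the determinant coincide and the bracket is $0$ — this is the ``otherwise'' alternative. If $s=s_t$ for some $t$, then $s_t$ is absent from the tail, and transporting it into its sorted position turns the bracket into $\pm M_{n,s_1,\ldots,\hat s_t,\ldots,s_k}$; the sign of that permutation is $(-1)^{s_t-t+1}$, because among the tail exponents exactly $s_t-(t-1)$ entries are smaller than $s_t$ (the integers $0,\ldots,s_t-1$ with $s_1,\ldots,s_{t-1}$ removed), and multiplying by $(-1)^{k-1}$ gives the asserted $(-1)^{k+s_t-t}$. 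If $s=n$, the exponent $n$ lies outside $\{0,\ldots,n-1\}$: after moving it to the last slot (sign $(-1)^{n-k}$) I would apply Proposition~\ref{md2}, with its parameter $k$ taken as $k-1$ and $e_n=0$, to expand the bracket as $\sum_{s'=0}^{n-1}(-1)^{n+s'-1}[k-1;\ldots,s']Q_{n,s'}$; the bracket is non-zero only when $s'=s_t$ for some $t$, and re-sorting each surviving term assembles the alternating sum $\sum_{t}(-1)^{n+k+t+1}M_{n,s_1,\ldots,\hat s_t,\ldots,s_k}Q_{n,s_t}$.

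The only genuine difficulty is the bookkeeping of the three independent sign contributions — the factor $(-1)^{t(k-t)}$ built into Theorem~\ref{dl1}, the transposition signs from reordering exponents inside the determinant, and the factors $(-1)^{n+s'-1}$ produced by Proposition~\ref{md2} — and checking that they collapse to the exponents $k+s_t-t$ and $n+k+t+1$ appearing in the statement; no idea beyond Theorem~\ref{dl1} and Proposition~\ref{md2} is required.
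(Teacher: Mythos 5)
Your proposal is correct and is essentially the paper's own argument: the paper proves this proposition by exactly the route you describe, namely applying Theorem~\ref{dl1} with $S=(s)$, $R=(0)$ (so the only admissible $J$ has $J_0=\{k+1,\ldots,n\}$), and then either re-sorting the exponents (case $s=s_t$), noting a repeated exponent kills the bracket (otherwise), or invoking Proposition~\ref{md2} to expand the exponent $n$ into the sum over $Q_{n,s_t}$ (case $s=n$). Your sign bookkeeping, $(-1)^{k-1}(-1)^{s_t-t+1}=(-1)^{k+s_t-t}$ and the collapse to $(-1)^{n+k+t+1}$ in the $s=n$ case, checks out against the stated formula.
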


\bigskip
\bibliographystyle{amsplain}

\end{document}